\documentclass[11pt, oneside]{amsart}
\textwidth=6in
\textheight=8in
\usepackage{amssymb,bm}
\usepackage{amsmath,mathtools} 
\usepackage{mathrsfs,graphicx,enumitem}

\setlength{\oddsidemargin}{0.0in}
\setlength{\evensidemargin}{0.0in}
\setlength{\textwidth}{6.5in}
\setlength{\topmargin}{0.0in}
\setlength{\textheight}{8.5in}

\usepackage[colorlinks=true]{hyperref}
\hypersetup{urlcolor=blue, citecolor=red}

 \setlength{\marginparwidth}{0.6in}

\renewcommand{\r}[1]{\eqref{#1}}
\newcommand{\PDO}{$\Psi$DO}
\DeclareMathOperator{\WF}{WF}

\newcommand{\be}[1]{\begin{equation}\label{#1}}
\newcommand{\ee}{\end{equation}}

\newtheorem{Theorem}{Theorem}[section]

\newtheorem{Proposition}[Theorem]{Proposition}

\theoremstyle{definition}
\newtheorem{example}{Example}

\numberwithin{equation}{section}
\renewcommand{\d}{\mathrm{d}}

\newcommand{\bo}{\partial M}
\DeclareMathOperator{\Ker}{Ker}
\def\C {\mathbf C}
\def\R {\mathbf R}

\newcommand{\supp}{\operatorname{supp}}

\renewcommand{\>}{\rangle}

\newcommand{\p}{\partial}
\newcommand{\Vol}{\operatorname{Vol}}

\date{\today}

\begin{document}
\title[Stability estimate for the geodesic ray transform]{Sharp stability estimate for the geodesic ray transform}

\author[Yernat M. Assylbekov]{Yernat M. Assylbekov}
\address{Department of Mathematics, Northeastern University, Boston, MA 02115, USA}
\email{y\_assylbekov@yahoo.com}
\thanks{The work of the first author was partially supported by AMS-Simons Travel Grant}

\author[Plamen Stefanov]{Plamen Stefanov}
\address{Department of Mathematics, Purdue University, West Lafayette, IN 47907, USA}
\email{stefanov@math.purdue.edu}
\thanks{Second author partly supported by  NSF  Grant DMS-1600327}

\begin{abstract} 
We prove a sharp $L^2\to H^{1/2}$ stability estimate for the geodesic X-ray transform of tensor fields of order $0$, $1$ and $2$ on a simple Riemannian manifold with a suitable chosen $H^{1/2}$ norm. We show that such an estimate holds for a family of such $H^{1/2}$  norms, not topologically equivalent, but equivalent on the range of the transform. The reason for this is that the geodesic X-ray transform has a microlocally structured range. 
\end{abstract}

\maketitle

\section{Introduction}

Let $(M,g)$ be a smooth compact $n$-dimensional Riemannian manifold with boundary $\bo$. We assume that $(M,g)$ is \emph{simple}, meaning that $\p M$ is strictly convex and that any two points on $\p M$ are joined by a unique minimizing geodesic.
The weighted  \emph{geodesic ray transform} $I_{m,\kappa}f$ of a smooth covariant symmetric $m$-tensor field $f$ on $M$ is given by
\be{0.1}
I_{m,\kappa} f(\gamma):=\int  \kappa(\gamma (t),\dot\gamma (t)) f_{i_1\dots i_m}(\gamma (t))\dot\gamma^{i_1}(t)\cdots\dot\gamma^{i_m}(t)\,\d t
\ee
where $\kappa$ is a smooth weight, 
 $\gamma$ runs over the set $\Gamma$ of all unit speed geodesics connecting boundary points, and the integrand, written in local coordinates, is invariantly defined. 

When $\kappa=1$, we drop the subscript $\kappa$ and simply write $I_m$. It is well known and can be checked easily that for every $\phi$ regular enough with $\phi=0$ on $\bo$, we have $\d\phi\in \Ker I_1$. Similarly, for every regular enough covector field $v$ of order $m-1$ vanishing at $\bo$, we have $\d^sv\in \Ker I_m$, where $\d^s$ is the symmetrized  covariant differential. Those differentials are called potential fields.  Many works have studied injectivity  of those transforms up to potential fields and stability estimates.

In the present paper, the bundle of symmetric covariant $m$-tensors on $M$ will be denoted by $S^m_M$. If $F$ is a notation for a function space ($H^s$, $C^\infty$, $L^p$, etc.), then we denote by $F(M;S^m_M)$ the corresponding space of sections of $S^m_M$. Note that $S^0_M=\C$ and in this case we simply write $F(M)$ instead of $F(M;S^0_M)$.

The goal of this paper is to prove sharp $L^2(M;S^m_M)\to H^{1/2} $ 
stability estimates for those transforms when $m=0,1,2$ with an appropriate choice of an $H^{1/2}$ space on $\Gamma$. The Sobolev exponents $0$ and $1/2$  are natural in view of the properties of $I_m$ as a Fourier Integral Operator in the interior $M^\textrm{int}$ of $M$.  The complications happen near the boundary. 
Before stating the main results, we will review the known estimates first. 

If $g=e$ is Euclidean, a natural parameterization of the lines in $\R^n$ is as follows:
\be{Sigma}
\ell_{z,\theta}=\{x+t\theta, t\in\R\}, \quad (z,\theta)\in \Sigma:=\{(z,\theta)\in \R^n\times S^{n-1}|\; z\cdot\theta=0\}.
\ee
 One defines the Sobolev spaces $\bar H^s(\Sigma)$ by using derivatives w.r.t.\ $z$ only, see also \r{Hs}. Then, with $I_0^e$ being the Euclidean X-ray transform on functions,
\be{1.1}
\|f\|_{H^s(\R^n)}/C\le \|I_0^ef\|_{\bar H^{s+1/2}(\Sigma)}\le C\|f\|_{H^s(\R^n)},
\ee
for every $f\in C_0^\infty(\Omega)$ with $\Omega\subset \R^n$ a smooth bounded domain, see \cite[Theorem~II.5.1]{Natterer-book} with $C=C(s,n,\Omega)$ (the constant on the right depends on $n$ only). This implies the estimate for every $f\in H^s_{\bar\Omega}$, see the discussion of the Sobolev spaces in Section~\ref{sec_Hs}. It is straightforward to see that the estimate still holds if we define $H^s(\Sigma)$ using all derivatives, including the $\theta$ ones. 

Estimate \r{1.1} was recently proved by Boman and Sharafutdinov \cite{Boman-S} for symmetric tensor fields of every order $m$  for $s=0$ and $f$ replaced by the solenoidal part $f^s$ of $f$, which is the  projection of $f$ to the orthogonal complement of its kernel in $L^2$:
\be{1.2}
\|f^s\|_{L^2(\Omega;S^m_\Omega)}/C\le \|I_m^ef\|_{H^{1/2}(\bar\Sigma)}\le C\|f^s\|_{L^2(\Omega;S^m_\Omega)},
\ee
where $I_m^e$ is the Euclidean ray transform of tensor fields of order $m$ supported in $\bar\Omega$.

In the Riemannian case, injectivity of $I_m$ up to potential fields (called s-injectivity) has been studied extensively, see, e.g., \cite{Sean-X, Monard14, MonardNP-2018,MonardSU14, PestovU04, Sh-UW, Sh-finite,S-AIP, SU-caustics,  SU-Duke, SU-JAMS, SUV-tensors,SUV_anisotropic, UV:local}.  The first proofs of injectivity/s-injectivity of $I_0$ and $I_1$  for simple metrics  in \cite{Muhometov_77, BGerver, AnikonovR} provide a stability estimate with a loss of an $1/2$ derivative. The ray transform there is parameterized by endpoins of geodesics. Another estimate with  a loss of an $1/2$ derivative, conditional when $m=2$, follows from Sharafutdinov's estimate in \cite{Sh-book} for $I_m$, see \r{1.4} below. Stability estimates in terms of the normal operator $N_m = I_m^*I_m$ are established in \cite{SU-Duke}:
\be{1.3}
\|f^s\|_{L^2(M;S^m_M)}/C\le \|N_mf\|_{H^{1}(M_1;S^m_{M_1})}\le C\|f^s\|_{L^2(M;S^m_M)},\quad \forall f\in L^2(M;S^m_M),\quad m=0,1,
\ee
where $M_1\Supset M$ is some extension of $M$ with $g$ extended to $M_1$ as a simple metric. When $m=0$, $f^s=f$ above. In \cite{FSU}, this estimate was extended to the weighted transform $I_{0,\kappa}$, with $\kappa$ never vanishing, under the assumption that the latter is injective, and even to more general geodesic-like  families of curves without conjugate points.  An analogous estimate for the weighted version of $I_1$, assuming injectivity, is proved in \cite{S-Sean-Doppler}. 
Those estimates are based on the fact that $N_m$ is a \PDO\ of order $-1$ elliptic on solenoidal tensor fields (or just elliptic for $m=0$) and injective. The need to have $M_1$ there comes from the fact that the standard \PDO\ calculus is not suited for studying operators on domains with boundary. On the other hand, \PDO s satisfying  the transmission condition can be used for such problems.  In \cite{MonardNP-2018},  it is showed that $N_0$ does not satisfy the transmission condition but satisfies a certain modified version of it. Then one can replace $M_1$ by $M$ in \r{1.3} for $m=0$ at the expense of replacing $H^1$ by a certain H\"ormander type of space. 
A sharp stability estimate for $I_{0,\kappa}: H^{-1/2}(M)\to L^2_\mu(\p_-SM)$ on the orthogonal complement on the kernel is established in \cite{Sean-X}; see next section for the Sobolev norms we use.

The case $m\ge2$ is harder and the $m=2$ one contains all the difficulties already. S-injectivity is known under an a priori upper bound of the curvature \cite{Sh-UW} and also for an open dense set of simple metrics, including real analytic ones \cite{SU-JAMS} (and for a class of non-simple metrics, see \cite{SU-lens}). It was shown in \cite{paternain2013tensor} that $I_m$ is s-injective on arbitrary simple surfaces for all $m\ge 2$. Under a curvature condition, 
Sharafutdinov \cite{Sh-UW} proved the stability estimate 
\be{1.4}
\|f\|_{L^2(M;S^m_M)}\le C\left( \|I_mf\|_{H^1(\p_-SM)} + m(m-1)\|I_mf\|_{L^2(\p_-SM)} \big\|j_\nu f|_{\partial M}\big\|_{L^2(\partial M;S^{m-1}_M)}\right),
\ee
$\forall f\in H^1(M;S^m_M)$, $m=0,1,2$, 
where  $j_\nu f$ equals $f$ ``shortened'' by the unit normal $\nu$ and the spaces 
above are introduced in the next section. This estimate is of conditional type when $m=2$  since $f$ appears on the r.h.s.\ as well; and not sharp since one would expect $I_mf$ to be in some form of an  $H^{1/2}$ norm, as in \r{1.1}. In terms of the normal operator, a non-sharp stability estimate for $I_2$ was established in \cite{SU-JAMS}; and in \cite{S-AIP}, the second author proved the sharp stability estimate \r{1.3} for $m=2$:
\begin{equation}\label{1.5}
\|f^s\|_{L^2(M;S^2_M)}/C\le \|N_2f\|_{H^1(M_1;S^2_{M_1})}\le C \|f^s\|_{L^2(M;S^2_M)}, \quad\forall f\in L^2(S^2_M).
\end{equation}
  The new ingredient in \cite{S-AIP} was to use the Korn inequality estimating $\|v\|_{H^1(M;S^1_M)}$ in terms of $\| v\|_{L^2(M;S^1_M)}+\|\d^s v\|_{L^2(M;S^2_M)}$.

The main result of this paper is a sharp estimate of  the kind \r{1.1}, \r{1.2} (but for  $g$ non- necessarily Euclidean)  for simple metrics and $m=0,1,2$. Our starting point are the estimates \r{1.3} for $m=0,1$ and \r{1.5} for $m=2$. As evident from \r{1.1}, and the remark after it,  there is some freedom in defining the Sobolev space for $I_mf$ since its range is microlocally structured, see also Example~\ref{ex6}. We show that if one defines a Sobolev space using, as derivatives, at least $n-1$ non-trivial Jacobi fields covering $M$ and pointwise linearly independent in $M$, then an analog of \r{1.1} and \r{1.2} holds. 

We define first the space $H^{1/2}_\Gamma$ in the following way. Let $M_1\Supset M$ be  close enough to $M$ so that $g$  extends as a simple metric to $M_1$. We parameterize the maximal unit geodesics $\Gamma_1$ in $M_1$ by initial points on $\bo_1$ and incoming unit directions, i.e., by $\p_-SM_1$, see \r{SM} below.  This defines a smooth structure in the interior of $\Gamma_1$ and natural Sobolev spaces in that interior (see also next section). We define $H^{1/2}_\Gamma$ as the subspace consisting  of the functions supported in $\Gamma$; the latter identified with their initial points and directions on $\p_-SM_1$ (but the geodesics are restricted to $M$).   
While the dot product depends on the extension, the topology does not. 

Clearly, if we try a similar parameterization by $\p_-SM$, we do not get a diffeomorphic relation at the boundary of $\Gamma$ consisting of geodesics tangent to $\bo$ (and having only one common point with $\bo$). 
One can still give an intrinsic definition of $H^{1/2}_\Gamma$   without extending $(M,g)$. 
We parameterize the maximal unit geodesics in $M$ in some neighborhood of the boundary by a point $z$ on each one maximizing the distance to $\bo$ and a unit direction $\theta$ at that point, see also Figure~\ref{pic_par} and section~\ref{sec_par}. One can view this as taking the strictly convex foliation $\text{dist}(\cdot,\bo)=p $, $0\le p\ll1$ first and then taking geodesics tangent to each such hypersurface. For this reason, we call it the foliation parameterization. One can extend it smoothly to geodesic in $M_1\Supset M$, with $g$ extended as a simple metric there, in a natural way. Then we define $H^{1/2}_\Gamma$ as the subspace of $H^{1/2}_0(\Gamma_1)$  consisting of functions supported in $\Gamma$; and this space is topologically equivalent to the previous definition. 
 We refer to section~\ref{sec_par} for more details. The resulting space is independent of the extension $(M_1,g)$.
In Theorem~\ref{thm1} below, we show that one can use prove sharp estimates with the $ H^{1/2}_\Gamma$ norm of $I_{m,\kappa}f$, as a special case. 

The space $ H^{1/2}_\Gamma$ is too large in the Euclidean case, at least, as evident from \r{1.1} and \r{1.2}, where the derivatives used in the definition of $\bar H^{1/2}(\Sigma)$ are  the $z$-ones only. We show that a smaller space can be chosen in the Riemannian case, as well. As mentioned above, we define a space (a family of such, actually) $\bar H^{1/2}_\Gamma$ in section~\ref{sec_HG}, roughly speaking as $H^{1/2}_\Gamma$ but we use  $k\ge n-1$ derivatives on $\p_-SM$ having the properties that the corresponding Jacobi fields are pointwise linearly independent over every point of $M$. If $k=2n-2$, we have  $\bar H^{1/2}_\Gamma=H^{1/2}_\Gamma$ but for $n-1\le k<2n-2$, $\bar H^{1/2}_\Gamma$ is a proper subspace of $ H^{1/2}_\Gamma$. Therefore, those spaces are not topologically equivalent, at least not when $k$ changes; but they are, on the range of $I_m$. 

Our main results is the following. 
\begin{Theorem}\label{thm1}
Let $(M,g)$ be a simple manifold. Let $\bar H_\Gamma^{1/2}$ be any of the spaces defined in section~\ref{sec_HG}. Then 

(a) If $I_{0,\kappa}$ is injective, then for all $f\in L^2(M)$,
\[
\|f\|_{L^2(M)}/C\le \|I_{0,\kappa}f\|_{\bar H^{1/2}_\Gamma}\le C\|f\|_{L^2(M)}.
\] 

(b) For  all 
$f\in L^2(M;S^1_M)$,
\[
\|f^s\|_{L^2(M;S^1_M)}/C\le \|I_1f\|_{\bar H^{1/2}_\Gamma}\le C\|f^s\|_{L^2(M;S^1_M)}.
\] 

(c) If $I_2$ is s-injective, then for  all 
$f\in L^2(M;S^2_M)$,
\[
\|f^s\|_{L^2(M;S^2_M)}/C\le \|I_2f\|_{\bar H^{1/2}_\Gamma}\le C\|f^s\|_{L^2(M;S^2_M)}.
\] 
\end{Theorem}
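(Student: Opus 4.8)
The plan is to reduce the three claimed estimates to the already-established normal-operator estimates \r{1.3} (for $m=0,1$) and \r{1.5} (for $m=2$), the only extra work being to pass between the $H^1(M_1)$-norm of $N_m f = I_m^* I_m f$ and the $\bar H^{1/2}_\Gamma$-norm of $I_m f$ itself. First I would fix one of the spaces $\bar H^{1/2}_\Gamma$ from section~\ref{sec_HG}, so that it is built from $k\ge n-1$ vector fields on $\p_-SM_1$ whose associated Jacobi fields are pointwise linearly independent over $M$, and record the two basic facts about it: (i) differentiating $I_m f$ along one of these vector fields amounts, up to smoothing and lower-order terms, to transporting a derivative of $f$ along the geodesic (this is the standard computation that $\partial_\theta$ or $\partial_z$ derivatives of the ray transform are again ray transforms of related tensor fields, combined with Santal\'o's formula), and (ii) the microlocal structure: $I_m$ is, in the interior, an elliptic FIO of order $-1/2$ (on solenoidal fields for $m\ge1$), associated with the canonical relation that doubly covers $T^*M^{\text{int}}$ by the two ``sides'' of each geodesic.

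The core of the argument is the upper bound $\|I_m f\|_{\bar H^{1/2}_\Gamma}\le C\|f^s\|_{L^2}$. Here I would write $\|I_m f\|_{\bar H^{1/2}_\Gamma}^2$ as a finite sum of $L^2$-norms of first derivatives of $I_m f$ along the chosen vector fields plus an $L^2(\Gamma)$ term, then use that each such derivative is again a (weighted) ray transform of a tensor field built from $f$ and one derivative of $f$; but since we only want $f\in L^2$, I would instead integrate by parts / use the self-adjoint structure, bounding $\|I_m f\|_{\bar H^{1/2}_\Gamma}^2 \le C\langle A^* A I_m f, I_m f\rangle = C\langle A^*A I_m f, I_m f\rangle$ where $A$ collects the derivatives defining the norm, and then moving $A^*A$ onto the $I_m$ to rewrite this as $\langle B f, f\rangle$ with $B$ a $\Psi$DO of order $-1$ on $M_1$ (after extending), hence $\le C\|f\|_{L^2}^2$ after the solenoidal projection; for $m=0$ this is immediate, for $m=1,2$ one restricts to $f=f^s$ and uses ellipticity on solenoidal fields. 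Equivalently, and perhaps more cleanly, one observes $\|I_m f\|_{H^{1/2}_\Gamma}^2 \asymp \langle N_m f, f\rangle_{?}$ only up to boundary corrections, so the honest route is: bound $\bar H^{1/2}_\Gamma \le C\, H^{1/2}_\Gamma$ trivially, and bound $\|I_m f\|_{H^{1/2}_\Gamma}$ by interpolating the mapping $I_m : L^2 \to L^2(\Gamma)$ (Schwartz-kernel estimate) and $I_m : H^{1/2}_{\text{comm}} \to H^1(\Gamma)$-type bounds, but this reintroduces derivatives of $f$. So the cleanest is the quadratic-form route through $N_m$.

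For the lower bound $\|f^s\|_{L^2}\le C\|I_m f\|_{\bar H^{1/2}_\Gamma}$, the idea is: $\|N_m f\|_{H^1(M_1)}^2 = \langle I_m^* I_m f, I_m^* I_m f\rangle_{H^1}$, and $I_m^* : H^{?}(\Gamma)\to H^{?+1/2}(M_1)$ is (microlocally in the interior) elliptic of order $1/2$; pairing, $\|N_m f\|_{H^1(M_1)} \le C\|I_m f\|_{H^{1/2}_\Gamma}$, provided the $H^{1/2}_\Gamma$-norm controls the relevant half-derivative on $\Gamma$ near the glancing boundary — this is exactly what the foliation parameterization in section~\ref{sec_par} arranges, and what the linear-independence of the $k\ge n-1$ Jacobi fields guarantees for $\bar H^{1/2}_\Gamma$ (the $\theta$-directions that $\bar H^{1/2}_\Gamma$ might omit are FIO-microlocally redundant because $I_m^*I_m$ only sees one covector direction per point, doubly covered). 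Then chain with \r{1.3}/\r{1.5}: $\|f^s\|_{L^2}\le C\|N_m f\|_{H^1(M_1)}\le C\|I_m f\|_{\bar H^{1/2}_\Gamma}$. The injectivity hypotheses in (a) and (c) are used precisely to promote the microlocal ellipticity (which only gives an estimate modulo a compact error) to a genuine estimate, exactly as in \r{1.3}; for (b) s-injectivity of $I_1$ is unconditional on simple manifolds, so no hypothesis is needed.

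The main obstacle I expect is the boundary behaviour: away from $M^{\text{int}}$ the FIO/$\Psi$DO machinery is clean, but near the set of geodesics tangent to $\bo$ the parameterization by $\p_- SM$ degenerates, which is why the paper introduces both the extension-to-$M_1$ definition and the intrinsic foliation parameterization of $H^{1/2}_\Gamma$. Making the half-derivative bookkeeping uniform across this glancing region — i.e.\ showing that the two definitions of $H^{1/2}_\Gamma$ coincide topologically and that $\bar H^{1/2}_\Gamma$ really does control what $I_m^*I_m$ needs — is the delicate part; I would handle it by working entirely on $M_1$, where everything is in the interior, and only at the very end noting that the $\bar H^{1/2}_\Gamma$-norm of a function supported in $\Gamma\subset\Gamma_1$ is equivalent to its $\bar H^{1/2}_0(\Gamma_1)$-norm, using the support condition together with the fact that the omitted ($\theta$-type) derivatives act tangentially to the range of $I_m$. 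A secondary technical point is verifying that the weight $\kappa$ in part (a) only contributes lower-order terms, so that ellipticity of $N_{0,\kappa}$ (equivalently of the associated $\Psi$DO) persists; this is routine given that $\kappa$ is smooth and, in the injective case, the principal symbol of $N_{0,\kappa}$ is a positive multiple of that of $N_0$.
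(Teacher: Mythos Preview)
Your high-level strategy matches the paper's: reduce to the normal-operator estimates \r{1.3}/\r{1.5}, use the mapping properties of $I_m$ and $I_m^*$ to pass between $\|N_mf\|_{H^1(M_1)}$ and $\|I_mf\|_{H^{1/2}_\Gamma}$, and then argue microlocally that the smaller $\bar H^{1/2}_\Gamma$ norm is equivalent to $H^{1/2}_\Gamma$ on the range of $I_m$. But two of your steps are either muddled or miss the actual mechanism.

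\textbf{Upper bound.} You cycle through a quadratic-form argument, an interpolation argument, and back, without landing. The paper does this in one line: Proposition~\ref{prop::boundedness of I*}(a) gives $I_m^{M_1}:L^2(M;S^m_M)\to H^{1/2}_\Gamma$ bounded (this is the $s=0$ case of an $H^s\to H^{s+1/2}$ mapping property quoted from earlier work), and since trivially $\|\cdot\|_{\bar H^{1/2}_\Gamma}\le C\|\cdot\|_{H^{1/2}_\Gamma}$ and $I_mf=I_mf^s$, the upper bound follows immediately. Your quadratic-form idea $\langle I_m^*A^*AI_mf,f\rangle$ with $A$ of order $1/2$ would also work (the composition is a $\Psi$DO of order $0$ under the Bolker condition), but you should commit to one route.

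\textbf{Passing from $H^{1/2}_\Gamma$ to $\bar H^{1/2}_\Gamma$.} This is where the real content lies, and your account (``$I_m^*I_m$ only sees one covector direction per point'') does not capture it. The paper's argument is about $I_m$, not $N_m$: writing the canonical relation $C$ of the FIO $I_m$ explicitly, one finds that the dual variable $\eta_\ell$ to $y^\ell$ equals $\xi_j\,\partial_{y^\ell}\gamma^j_y(t)$, i.e.\ the Jacobi field along $\partial_{y^\ell}$ paired with $\xi$. Condition \r{ND} says precisely that for every $(x,\xi)$ with $\xi\neq0$, not all of $\eta_1,\dots,\eta_k$ vanish. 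Hence $\Delta_{y'}$ is \emph{elliptic on the image of $C$}, which is where $\WF(I_mf)$ must sit. Building a parametrix for $(1-\Delta_{y'})^{1/2}$ there yields
\[
\|I_mf\|_{H^{1/2}_\Gamma}\le C\|I_mf\|_{\bar H^{1/2}_\Gamma}+C_N\|f\|_{H^{-N}},
\]
and the smoothing remainder is then absorbed by the assumed (s-)injectivity via the standard compactness argument. You correctly anticipate that injectivity is used to kill a compact error, but you locate that step ``exactly as in \r{1.3}'' rather than as a \emph{second}, independent application after the parametrix step; and you never identify why $\Delta_{y'}$ is elliptic on the relevant set, which is the crux. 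The boundary/glancing issues you worry about are in fact handled simply by working on $M_1$ throughout, so that $\Gamma$ sits compactly in the interior of $\Gamma_1$; no separate boundary analysis is needed.
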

Note that if $\kappa$ is constant, or more generally related to an attenuation depending on the position only, then $I_{0,\kappa}$ is injective \cite{SaloU11} on surfaces, and $I_1$ is injective, too \cite{AnikonovR}. The transform $I_{0,\kappa}$ is not injective for all, say non-zero, weights \cite{Boman11,Boman93} when $n=2$ and $g$ is Euclidean but it is injective for $n\ge3$ under some conditions on the metric, as it follows from \cite{UV:local,SUV_anisotropic}, for example. 
Injectivity and stability of $I_{1,\kappa}$ has been studied in \cite{S-Sean-Doppler} and the estimate there implies an estimate of the type above which we do not formulate. 
Conditions for injectivity of $I_2$ can be found in \cite{Sh-UW,Sh-book,SU-AJM,SU-JAMS,SUV-tensors}. 

Theorem~\ref{thm1} generalizes \r{1.1} and \r{1.2} to the Riemannian case, in particular. 
In section~\ref{sec_HG}, we present several specific realizations of the $\bar H_\Gamma^{1/2}$ norms with $k<2n-2$ (strictly), i.e., Sobolev spaces defined with fewer derivatives. Whether one can define such  spaces in the natural $\p_-SM$ or $B(\bo)$ parameterizations (see section~\ref{sec_par}) for which Theorem~\ref{thm1} would remain valid, is an open problem, see also  Example~\ref{ex4}. 

\subsection*{Acknowledgments.} The authors thank Gabriel Paternain and Fran\c{c}ois Monard for the discussion about the results in  \cite{MonardNP-2018} and for their helpful comments.

\section{Preliminaries}\label{sec_prel}

\subsection{Sobolev spaces} \label{sec_Hs}
Consider a simple manifold $(M,g)$. Let $SM:=\{(x,v)\in TM:|v|_{g(x)}=1\}$ be its unit sphere bundle and $\p_\pm SM$ be the set of inward/outward unit vectors on~$\p M$,
\be{SM}
\p_\pm SM:=\{(x,v)\in SM:x\in\p M\text{ and }\pm\<v,\nu(x)\>_{g(x)}< 0\},
\ee
where $\nu$ is the inward unit normal to $\p M$. By $\d\Sigma^{2n-1}$ we denote the Liouville volume form on $SM$ and by $\d\Sigma^{2n-2}$ its induced volume form on $\p_\pm SM$. Following \cite{Sh-book},  the Sobolev spaces $H^s(SM)$, $H^s_0(SM)$, $H^{-s}(SM)$ and $H^s(\p_\pm SM)$, $H^s_0(\p_\pm SM)$, $H^{-s}(\p_\pm SM)$, for $s\ge 0$, are the ones  w.r.t. the measures $d\Sigma^{2n-1}$ and $d\Sigma^{2n-2}$, respectively, defined in a standard way.

 We recall that for $s\ge0$, there are several ``natural'' ways to define a Sobolev  space when $\Omega\subset \R^n$ is a domain (or a manifold) with a smooth boundary: $H^s(\Omega)$ is the restriction of distributions in $H^s(\R^n)$  to $\Omega$; next, $H_0^s(\Omega)$ is the completion  of $C_0^\infty(\Omega)$ in  $H^s(\Omega)$; and $H^s_{\bar\Omega}$   is the space of all $u\in H^s(\R^n)$ supported in $\bar\Omega$, also equal to the completion of $C_0^\infty(\Omega)$ in $H^s(\R^n)$, also the space of all $f$ which, extended as zero outside $\bar\Omega$, belong to $H^s(\R^n)$. We have $H^s_{\bar\Omega}= H_0^s(\Omega)$  for $s\not=1/2, 3/2,\dots$, and $H^s_{\bar\Omega}\subset H_0^s(\Omega)\subset H^s(\Omega)$ in general;  and $H_0^s(\Omega)=H^s(\Omega)$ for $0\le s\le 1/2$. 
We have $H^s(\Omega)^* = H^{-s}_{\bar\Omega}$ and $(H^{s}_{\bar\Omega})^* = H^{-s}(\Omega)$, for all $s\in\R$. 
 Those definitions extend naturally to manifolds with boundary. 
We refer to \cite{McLean-book} for more details.

In a similar way, we can define the weighted Sobolev spaces on $H^s_\mu(\p_-SM)$, $s\in\R$. More precisely, for $k\ge0$ integer, $H^k_\mu(\p_-SM)$ is the $H^k$-Sobolev space on $\p_-SM$ w.r.t. the measure $d\mu(x,v):=\<v,\nu(x)\>_{g(x)}\,d\Sigma^{2n-2}(x,v)$. For arbitrary $s\ge 0$, $H^s_\mu(\p_-SM)$ is defined via complex interpolation.
Restricted to distributions supported in the compact $\Gamma$ however, the factor $\mu$ is bounded by below (and above) by a positive constant and it can be removed from the definition.

%

\subsection{Geodesics and the scattering relation}
One way to parameterize the geodesics going from $\p M$ into $M$ is by the set $\p_-SM$, see also Section~\ref{sec_par}. More precisely, for $(x,v)\in\p_-SM)$, we write $\gamma_{x,v}(t)$, $0\le t\le \tau(x,v)$, for the unique geodesic with $x=\gamma_{x,v}(0)$ and $v=\dot\gamma_{x,v}(0)$. Here and in what follows, we set $\tau(x,v):=\max\{t:\gamma_{x,v}(s)\in M\text{ for all } 0\le s\le t\}$ for $(x,v)\in SM$, i.e. the first positive time when $\gamma_{x,v}$ exits $M$. If $(M,g)$ is simple, then $\tau$ is smooth up to the boundary $S\bo$ of $\p_-SM$; more precisely, the extension as $\tau(x,-v)$ to $\p_-SM$ (and extended by continuity on the common boundary) is smooth, see \cite[Lemma 4.1.1]{Sh-book}. Note that $\tau$ is not smooth when $x$  is 
not restricted to $\bo$; the normal derivative has a square root type of singularity at $\bo$.

%
%
%


\subsection{The weighted geodesic ray transform and its adjoint} 
Let $\kappa$ be a smooth function on $SM$. Then the weighted geodesic ray transform $I_{m,\kappa} f$ of $f\in C^\infty(M;S^m_M)$ in \r{0.1} can be expressed in local coordinates  as
$$
I_{m,\kappa} f(x,v)=\int_0^{\tau(x,v)} \kappa(\gamma_{x,v}(t),\dot\gamma_{x,v}(t)) f_{i_1\dots i_m}(\gamma_{x,v}(t))\,\dot \gamma_{x,v}^{i_1}(t)\cdots\dot \gamma_{x,v}^{i_m}(t)\,\d t,\qquad (x,v)\in\p_-SM,
$$
see also \r{0.1}. 
Using Santal\'o formula \cite[Lemma~A.8]{dairbekov2007boundary}, one can see that $I_{m,\kappa}$ is bounded from $L^2(M;S^m_M)$ to $L^2_\mu (\p_-SM)$. 
Its properties as a Fourier Integral Operator suggest that those norms are not sharp, see Proposition~\ref{prop::boundedness of I*}.


Consider the adjoint operator $I_{m,\kappa}^*:L^2_\mu(\p_-SM)\to L^2(M;S^m_M)$. Then again by Santal\'o's formula \cite[Lemma~A.8]{dairbekov2007boundary}, 
\begin{align*}
(I_{m,\kappa}f,w)_{L^2_\mu(\p_-SM)}&=\int_{\p_-SM}\int_0^{\tau(x,v)}\kappa(\gamma_{x,v},\dot\gamma_{x,v}) f_{i_1\dots i_m}(\gamma_{x,v})\,\dot \gamma_{x,v}^{i_1}\cdots\dot \gamma_{x,v}^{i_m}\, \bar w(x,v)\,dt\,d\mu(x,v)\\
&=\int_{SM}\kappa(x,v)f_{i_1\dots i_m}(x)v^{i_1}\cdots v^{i_m}\, \bar w_\psi(x,v)\,d\Sigma^{2n-1},
\end{align*}
where $\psi_w$ is the function on $SM$ that is constant along geodesics and $\psi_w|_{\p_-SM}=w$. 
 Hence, we have
$$
I_{m,\kappa}^*w(x)=\int_{S_x M} v^{i_1}\cdots v^{i_m}\, {\bar\kappa(x,v)}\, \psi_w(x,v)\,d\sigma_x(v),
$$
where $d\sigma_x(v)$ is the measure on $S_x M$ such that $d\sigma_x(v) d\Vol_g(x)=d\Sigma^{2n-1}(x,v)$.

\subsection{Parameterizations of the geodesic manifold} \label{sec_par}
There are three main parameterizations of the set $\Gamma$ of the maximal directed unit speed geodesics  on a simple manifold $(M,g)$. We include geodesics generating to a point corresponding to initial directions tangent to $\bo$ to make $\Gamma$ compact; we call that set $\partial\Gamma$. We recall those three parameterizations below, and we include our foliation one for completeness. Note that  the first three are global and their correctness is guaranteed by the simplicity assumption. 

\begin{description}[itemsep=5pt, topsep=5pt]
\item[{$\p_-SM$ parameterization}]  by initial points and incoming directions.  Each $\gamma\in\Gamma$ is parameterized by an initial point $x\in \bo$ and initial unit direction $v$ at $x$, i.e., by $(x,v)\in \p_-SM$. We write $\gamma=\gamma_{x,v}(t)$, $0\le t\le\tau(x,v)$, where the latter is the length of the maximal geodesic issued from $(x,v)$.

\item[{$B(\bo)$ parameterization}] by initial points and tangential projections of incoming directions.  Each $\gamma\in\Gamma$ is parameterized by an initial point $x\in \bo$ and the orthogonal tangential projection $v'$ of its initial unit direction $v$ at $x$, i.e., by $(x,v')\in B(\bo) $, where $B$ stands for the unit ball bundle. We write somewhat incorrectly  $\gamma=\gamma_{x,v'}$. 

\item[{$\bo\times\bo$ parameterization}] by initial and end points. Each $\gamma\in\Gamma$ is parameterized by its endpoints $x$ and $y$ on $\bo$. If we think of $\gamma$ as a directed geodesic, then the direction is from $x$ to $y$. We  use the notation $\gamma = \gamma_{[x,y]}$.
\item[{foliation parameterization}] Near $\p\Gamma$, let $z$ be the point where the maximum of $\text{dist}(\gamma,\bo)$ is attained, and let $\theta\in SM$ be the direction at $z$. 
 We  use the notation $\gamma = \gamma(\cdot,z,\theta)$. Away from $\p\Gamma$, we can use any of the other parameterizations. We give more details below. 
\end{description}

Identifying $\Gamma$ with the corresponding set of parameters, each one of them being a manifold, introduces a natural manifold structure on it.  While those differential structures are different (near $\partial\Gamma$), the first two ones are homeomorphic but not diffeomorphic.  
In the $\p_-SM$ and in the $B(\bo)$  parameterizations, $\Gamma$ is a compact manifold with boundary $\partial \Gamma$. The boundary in the first one can be removed by allowing geodesics to propagate backwards. 
In the $\bo\times\bo$ one, $\Gamma$ is a compact manifold without a boundary; then $\p\Gamma$ is an incorrect notation and it represents the diagonal. If we project the unit sphere bundle to the unit ball one in the standard way $v\mapsto v'$, the resulting map is not a diffeomorphism up to the boundary, i.e.,  at $v$ tangent to $\bo$. The foliation parameterization makes $\Gamma$ a manifold with a boundary $\p\Gamma$ as well but it allows a natural smooth extension of $\Gamma$ to a smooth manifold  of geodesic $\Gamma_1$ on an extended $M_1\Supset M$, as we show below. 

We describe the foliation parameterization in more detail now. 
Fix a point $q\in\partial M$ and assume that $\partial M$ is strictly convex at $q$ w.r.t.\ $g$. Let $\partial M_1$ be as above. We work in boundary normal coordinates near $q$ in which $q=0$ and $x^n$ is the signed distance to $\partial M$, non-negative in $M$. We can always assume that $\partial M_1$ is given locally by $x^n=-\delta$ with some $0<\delta\ll1$. 
Let $\Gamma_1$ be a small neighborhood of the geodesics tangent to $\partial M$ at $q$ extended until they hit $\partial M_1$. Note that this includes geodesic segments which may lie outside of $M$. 
 We will choose a parameterization of $\Gamma_1$ in the following way. First, since any  geodesic $\gamma\in\Gamma_1$ hits $\partial M_1$ transversally at both ends when $\delta\ll1$, we can parameterize $\gamma$ by its initial point $y'\in\partial M_1$ and incoming unit directions $w$ or their projections $w'$  on $T_{y'}\partial M_1$.
Denote this geodesic by $\gamma_{y',w}$.  
 The foliation  parameterization of $\gamma$ is by $(z,\theta)$, where $z=(z',z^n)$ is the point maximizing the signed distance form $\gamma$ to $\partial M$ (regardless of whether $\gamma$ is entirely outside $M$ or hits $\partial M$), and by the direction $\theta$ at $z$ which must be tangent to the hypersurface $x^n=z^n$. In Figure~\ref{pic_par} on the left, we illustrate this on an almost Euclidean looking example (which is more intuitive) and in the right, we do this in boundary normal coordinates. We call the corresponding geodesic $\gamma(\cdot,z,\theta)$.

\begin{figure}[h!]
  \centering
\includegraphics[page=1]{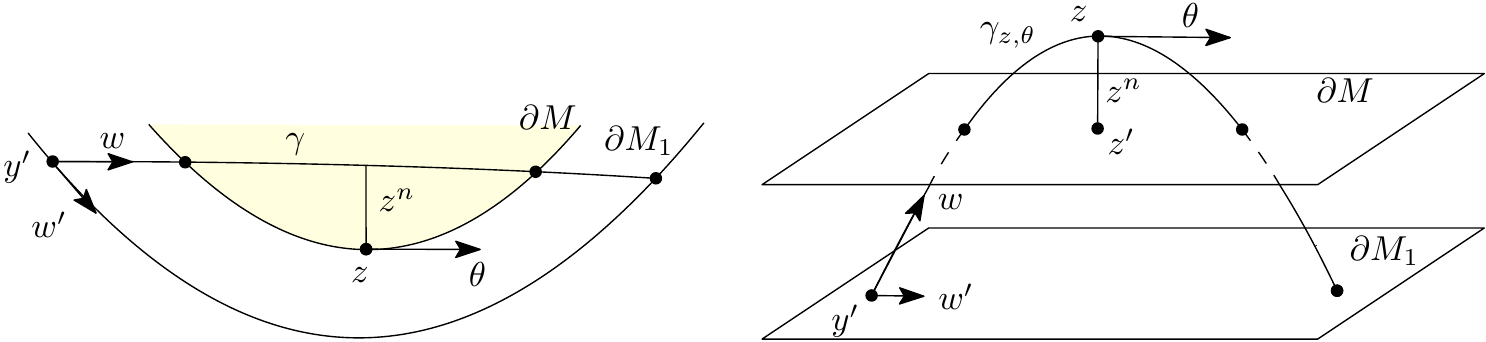}
\caption{The foliation parameterization by $(z,\theta)$}\label{pic_par}
\end{figure}

Another way to describe the foliation parameterization, which explains it name, is to think of the hyperplanes $\Sigma_p:=\{x^n=p\}$, $|p|\ll1$,  as a strictly convex foliation near $q$. Then $\gamma_{z,\theta}$ is the geodesic through $z\in \Sigma_{z^n}$ tangent to it with unit direction $\theta\in S_z\Sigma_{z^n}$. This defines a natural measure on the set of $(z,\theta)$ which we may identify with $\Gamma_1$ given by $\d\Vol_z\, \d\mu_\theta$, where $\d\mu_\theta$ is the natural measure on $S_z\Sigma_p$. Then $(z,\theta)$ belongs locally to the foliation $T\Sigma_p$, $|p|\ll1$ with $p=z^n$, $(z',\theta)\in T\Sigma_p$. 

 Let us compare the $\p_-SM$  parameterization by $(y',w)\in \p_-SM_1$ to the $B(\bo_1)$ one by  $(y',w')$. As we emphasized above, they are related by a diffeomorphism which becomes singular when $w$ is tangent to $\partial M_1$. Such almost tangent geodesics (to $\bo_1$) however do not hit $M$; therefore when parameterizing $If$ with $\supp f\subseteq M$, those two parameterizations are diffeomorphic to each other. 
  
\begin{Proposition}\label{pr_map}
Assume that $\partial M$ is strictly convex at $q$. Then the map $(y',w)\mapsto (z,\theta)$ is a local diffeomorphism. 
\end{Proposition}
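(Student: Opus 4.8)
The plan is to show that the composition $(y',w) \mapsto \gamma_{y',w} \mapsto (z,\theta)$ is smooth with smooth inverse near each tangent geodesic, by working in the boundary normal coordinates introduced above and using the fact that $\partial M$ (equivalently each leaf $\Sigma_p = \{x^n = p\}$) is strictly convex. First I would set up coordinates: for a geodesic $\gamma$ in $\Gamma_1$, let $u(t) = x^n(\gamma(t))$ be the signed distance to $\partial M$ along $\gamma$. Strict convexity of the foliation $\Sigma_p$ means exactly that along any geodesic not everywhere tangent to a leaf, $u$ has a nondegenerate behavior: at an interior critical point $t_0$ of $u$ one has $\dot u(t_0) = 0$ and $\ddot u(t_0) > 0$ (this is the second fundamental form of $\Sigma_{u(t_0)}$ evaluated on the unit tangent $\dot\gamma(t_0)$, which is strictly positive by the strict convexity hypothesis at $q$, hence in a neighborhood). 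So each $\gamma \in \Gamma_1$ near the tangent geodesics has a unique minimum of $u$ — wait, a unique \emph{maximum} of the signed distance, i.e. the point $z$ — and it is nondegenerate.

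The key step is an implicit function theorem argument. Define $z$ as the point on $\gamma_{y',w}$ where $\dot u = 0$; since $\ddot u \neq 0$ there, the implicit function theorem gives that the time $t_0 = t_0(y',w)$ of this critical point depends smoothly on $(y',w)$, and hence $z = \gamma_{y',w}(t_0(y',w))$ depends smoothly on $(y',w)$, as does $\theta = \dot\gamma_{y',w}(t_0(y',w))$; note $\theta$ is automatically tangent to $\Sigma_{z^n}$ because $\dot u(t_0) = 0$ means the $x^n$-component of $\theta$ vanishes. So $(y',w) \mapsto (z,\theta)$ is smooth. For the inverse: given $(z,\theta)$ with $\theta \in S_z\Sigma_{z^n}$, the geodesic $\gamma(\cdot,z,\theta)$ with $\gamma(0) = z$, $\dot\gamma(0) = \theta$ is determined by the geodesic flow, which is smooth; following it forward and backward until it meets $\partial M_1$ (transversal intersection since $\delta \ll 1$, so again smooth by the implicit function theorem applied to $x^n \circ \gamma = -\delta$) recovers $(y',w)$ smoothly. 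Finally one checks that these two smooth maps are mutually inverse — that $z$ is indeed the (unique) maximizer of the signed distance along $\gamma(\cdot,z,\theta)$ — which follows from uniqueness and nondegeneracy of the critical point of $u$ established above. Since both directions are smooth and inverse to one another, the map is a local diffeomorphism.

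The main obstacle is making precise the uniqueness and nondegeneracy of the distance-maximizing point $z$ along geodesics in $\Gamma_1$, uniformly in a neighborhood. The point $q$ is a tangency point where the tangent geodesic touches $\partial M$ to second order, and one must argue that strict convexity at $q$ persists (with a uniform lower bound on the second fundamental form) in a $g$-neighborhood, so that for $\delta$ small enough \emph{and} the neighborhood $\Gamma_1$ small enough, every $\gamma \in \Gamma_1$ still has $\ddot u > 0$ at any critical point of $u$ — in particular $u$ is strictly convex as a function of $t$ on the relevant interval, guaranteeing a unique maximizer of the signed distance, which may occur either in the interior (the generic tangent-to-a-leaf case) or, for geodesics that stay inside $M$ and are transversal to $\partial M$ everywhere, one must check the parameterization is still the one intended; the excerpt sidesteps this by noting that away from $\partial\Gamma$ one uses the other (already-understood) parameterizations, so it suffices to handle a neighborhood of the tangent geodesics, where the interior-maximum picture is the correct one. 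A secondary (routine) point is verifying the Jacobian is nonsingular, but since we exhibit an explicit smooth inverse this is subsumed.
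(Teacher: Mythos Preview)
Your proposal is correct and follows essentially the same approach as the paper: both apply the Implicit Function Theorem to the equation $\dot\gamma^n_{y',w}(t)=0$, using that $\ddot\gamma^n$ at the critical point equals (up to sign) the second fundamental form of the level hypersurface and is therefore nonzero by strict convexity, to obtain a smooth critical time $t_0(y',w)$ and hence smoothness of $(y',w)\mapsto(z,\theta)$; and both obtain smoothness of the inverse from the transversal intersection of $\gamma(\cdot,z,\theta)$ with $\partial M_1$. Your momentary sign confusion (``$\ddot u>0$'' versus ``unique maximum'') is harmless, since only nondegeneracy of the critical point is needed for the IFT step.
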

\begin{proof}
Let $\tau(y',w)$ be the travel time of the unit speed geodesic issued from $(y',w)\in \p_-SM_1$ to $z$, i.e., $\tau$ maximizes $\gamma^n_{y',w}(\tau)$ locally. Then $\tau$ is a critical point, i.e., $\dot\gamma^n_{y',w}(\tau)=0$. Let $\gamma_{y_0',w_0}$ be a geodesic tangent to $\partial M$ at $q=\gamma_{y_0',w_0}(\tau_0)$ with some $\tau_0$. To solve $\dot\gamma^n_{y',w}(\tau)=0$ for $(y_0',w_0)$ near $(y',w)$, we apply the Implicit Function Theorem. Since  $\ddot\gamma^n_{y_0',w_0}(\tau_0)=-\Gamma_{ij}^n(q)\dot\gamma^i_{y_0',w_0}(\tau_0) \dot\gamma^j_{y_0',w_0}(\tau_0)$ and the latter equals twice the second fundamental form at $q$, we get a unique smooth $\tau(y',w)$ with $\tau(y_0',w_0)=\tau_0$. 

Since $z=\gamma_{y',w}(\tau(  y',w ))$ and $\theta=\gamma'_{y',w}(\tau(  y',w ))$ (the prime stands for the projection onto the first $n-1$ coordinates in boundary normal coordinates), we get that  $(y',w)\mapsto (z,\theta)$ is smooth. 

To verify that the inverse map $(z,\theta) \mapsto (y',w)$ is smooth, it is enough to show that the travel time $t(z,\theta)$ at which $\gamma_{z,\theta}(t)$ reaches $\partial M_1=\{ z^n =-\delta\}$ is a smooth function as well. This follows easily from the fact that geodesics tangent to $\partial M$ hit $\partial M_1$ transversely when $\delta\ll1$. 
\end{proof}

\subsection{The space $H_\Gamma^s$} 
As before, we embed $(M,g)$ in the interior of a simple manifold $(M_1,g)$ (the metric on $M_1$ is an extension of the metric on $M$). We also extend $\kappa$ smoothly to $SM_1$ and keep the same notation for the extension. We denote by $I^{M_1}_{m,\kappa}$ the geodesic ray transform on $M_1$. The set of the oriented geodesics through $M$ will be called $\Gamma$ as before. They are a subset of (the extensions to) all oriented geodesics $\Gamma_1$ in $M_1$. The latter set is parameterized by $\p_-SM_1$; and we identify $\Gamma_1$ with it. In particular, $\Gamma_1$ becomes a manifold with boundary and $\Gamma$ is a compact submanifold contained in its interior. 
On $\p_-SM_1\cong\Gamma_1$ we have two natural measures, as above: one is $\d\Sigma_1^{2n-2}$ and the other one is $\d\mu_1$, the subscript $1$ indicating that they are on $\Gamma_1$. They are equivalent (define equivalent Sobolev spaces) away from $\partial\Gamma_1$. 


Note that the simplicity is not really needed and assuming non-trapping instead of no conjugate points is enough. The strict convexity of $\bo$ is convenient for parameterizing $\Gamma$ on $SM_1$ but that assumption is not needed either, see e.g., \cite{SU-AJM}. 
In the $(z,\theta)$ coordinates, $\Gamma$ is given by $z^n\ge0$. 
For $u$ supported in $\Gamma$, we define the Sobolev space $H_\Gamma^s$ as $H^s_{\bar\Omega}$ above. In particular, when $s$ is a non-negative integer, identifying $\theta$ locally with some parameterization in $\R^{n-1}$, we have locally
\be{Hs}
\|u\|_{H^s_\Gamma}^2 = \int_{(\R^{n-1})^2} \sum_{|\alpha|\le s} |\partial_{z,\theta}^\alpha u|^2  dz\,d\theta.
\ee
This norm is not invariantly defined but under changes of variables, it transforms into equivalent norms.  Note that $u$ above is considered as a function defined on $\Gamma_1$ but supported in $\Gamma$, as in the definition of  $H^s_{\bar\Omega}$ above.

We make this definition global now. Without changing the notation, let $\Gamma_1$ be the manifold of all geodesics with endpoints on $\partial M_1$, and let   $\Gamma$ be those intersecting $M$. We can choose an open cover of $\Gamma$ consisting of neighborhoods of geodesics tangent to $M$ as above, plus an open set $\Gamma_0$ of geodesics passing through interior points only, and having a positive lower bound of the angle they make with $\partial M$. In the latter, we take the classical $H^s$ norm w.r.t.\ the parameterization $(y',w)$, for example. In the former neighborhood, we use the norms $ H^s_\Gamma$ defined above. Then using a partition of unity, we extend the norm $H^s_\Gamma$ to functions defined in $\Gamma_1$ and supported in  $\Gamma$. This defines a Hilbert space which we call $H^s_\Gamma$ again. 

On the other hand, we have the  space $H^s_\Gamma(\p_-SM_1)$ of distributions on $\Gamma$ defined through the parameterization of $\Gamma$ given by $(y',w)\in \p_-SM_1$ in a similar way: we define the $H^s$ norm for functions supported in the interior of $\Gamma_1$ first (the behavior near the boundary of $\Gamma_1$ corresponding to $w$ tangent to $\partial M_1$ does not matter in what follows), and then define $H^s_\Gamma(\p_-SM_1)$ as the subspace of those $u\in H^s(\p_-SM_1)$ which are supported in $\Gamma$. We define $H^{s}({\Gamma^\text{\rm int}})$ similarly, where $\Gamma^\text{\rm int}$ is the interior of $\Gamma$.

Proposition~\ref{pr_map} then implies the following. 

\begin{Proposition}\label{pr_eq}
The Hilbert spaces $H^s_\Gamma(\p_-SM_1)$ and $H^s_\Gamma$ are topologically equivalent. 
\end{Proposition}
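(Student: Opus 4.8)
The plan is to reduce the global statement to a local one and then invoke Proposition~\ref{pr_map}. Recall that both $H^s_\Gamma$ and $H^s_\Gamma(\p_-SM_1)$ are defined via partitions of unity: away from $\p\Gamma$ the two spaces literally use the same parameterization (the $(y',w)$ one), so the only issue is in a neighborhood of a geodesic tangent to $\partial M$. Fix such a tangent geodesic $\gamma_{y_0',w_0}$ and a relatively compact coordinate neighborhood $U$ of the corresponding point in $\Gamma_1$, small enough that Proposition~\ref{pr_map} applies on $U$: the map $\Phi\colon (y',w)\mapsto(z,\theta)$ is a diffeomorphism from $U$ (in $\p_-SM_1$-coordinates) onto its image $\Phi(U)$ (in foliation coordinates), smooth with smooth inverse up to all relevant boundaries because of the transversality argument in the proof of Proposition~\ref{pr_map}.

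First I would record the standard fact that a diffeomorphism between open subsets of Euclidean space (or manifolds), smooth with all derivatives bounded on the supports in question together with its inverse, induces a bounded invertible map on $H^s$ for every $s\ge 0$, and hence on $H^s_{\bar\Omega}$-type spaces (functions supported in a fixed compact set), by the chain rule and interpolation; for negative $s$ this follows by duality using $(H^s(\Omega))^*=H^{-s}_{\bar\Omega}$ as recalled in Section~\ref{sec_Hs}. This is where I would be slightly careful: the relevant spaces are not $H^s$ on an open manifold but the "supported in $\Gamma$" spaces, i.e. $H^s_{\bar\Omega}$ with $\Omega$ the region $z^n>0$ (resp.\ its preimage under $\Phi$). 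Since $\Phi$ maps $\{z^n>0\}\cap U$ diffeomorphically onto the corresponding piece of $\Gamma$ and maps its complement to its complement (the tangency locus $z^n=0$ corresponds exactly to geodesics tangent to $\partial M$), pullback by $\Phi$ carries functions supported in $\Gamma$ to functions supported in $\Gamma$, so it restricts to an isomorphism between the local versions of $H^s_\Gamma$ and $H^s_\Gamma(\p_-SM_1)$.

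Next I would patch: choose a common finite open cover of $\Gamma$ subordinate to which both norms are defined — the interior piece $\Gamma_0$ plus finitely many tangential neighborhoods $U_j$ — and a subordinate partition of unity $\{\chi_j\}$. On $\Gamma_0$ the two norms agree verbatim. On each $U_j$, the previous paragraph gives $c_j\|\chi_j u\|_{H^s_\Gamma}\le \|\chi_j u\|_{H^s_\Gamma(\p_-SM_1)}\le C_j\|\chi_j u\|_{H^s_\Gamma}$ with constants depending only on finitely many derivatives of $\Phi_j^{\pm1}$ on the (compact) supports. Summing over the finitely many patches and using that changing the partition of unity changes the norm only up to equivalence (the usual localization lemma for Sobolev spaces) yields the two-sided estimate $\|u\|_{H^s_\Gamma}\asymp \|u\|_{H^s_\Gamma(\p_-SM_1)}$ for all $u$ supported in $\Gamma$, which is exactly topological equivalence.

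The main obstacle — really the only non-bookkeeping point — is confirming that the diffeomorphism $\Phi$ of Proposition~\ref{pr_map} and its inverse are smooth \emph{uniformly up to the tangency boundary} $\{z^n=0\}$, so that the induced map on the "supported-in-$\Gamma$" Sobolev spaces is bounded; this is precisely what the Implicit Function Theorem argument in the proof of Proposition~\ref{pr_map} provides, the key being that the second fundamental form of $\partial M$ at $q$ is nonzero (strict convexity), so $\tau(y',w)$ and $t(z,\theta)$ are genuinely smooth there rather than merely continuous. Everything else — the behavior of $H^s$ under diffeomorphisms, duality for negative $s$, and partition-of-unity patching — is standard and I would simply cite \cite{McLean-book}.
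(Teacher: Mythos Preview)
Your proposal is correct and follows the same approach as the paper: the paper simply states that Proposition~\ref{pr_map} implies Proposition~\ref{pr_eq} without further elaboration, and what you have written is exactly the standard unpacking of that implication (diffeomorphism-invariance of $H^s$ on compactly supported functions, plus partition-of-unity patching). Your explicit attention to the fact that $\Phi$ respects the support condition $\{z^n\ge 0\}$ and is smooth up to the tangency boundary is the right emphasis.
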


\subsection{Mapping properties of $I_{m,\kappa}$ and $I_{m,\kappa}^*$}\label{scn::mapping properties of I and N}
We study the mapping properties of $I_{m,\kappa}$, $I_{m,\kappa}^*$     
restricted a priori to tensor fields/functions supported in fixed compact subsets. This avoids the more delicate question what happens near the boundaries of $M$ and $\Gamma$ but we do not need the latter.

\begin{Proposition}\label{prop::boundedness of I*}
Suppose that $(M,g)$ is simple, $\kappa\in C^\infty(SM)$, and $m\ge 0$. Then, for  $s\ge-1$,

(a)   $I^{M_1}_{m,\kappa} : H^s_M(M, S^m_M)\to H^{s+1/2}_{\Gamma} $ 
is bounded,  

(b)  $(I^{M_1}_{m,\kappa})^*:  
H^{-s-1/2}_{\Gamma}
\to H^{-s}(M_1^\text{\rm int};S^m_M)$ is bounded.
\end{Proposition}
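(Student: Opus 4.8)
<br>

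The plan is to reduce everything to the interior pseudodifferential calculus and to one structural input about $I^{M_1}_{m,\kappa}$: on a simple manifold the geodesic ray transform satisfies the Bolker (no--conjugate--points) condition, so that $(I^{M_1}_{m,\kappa})^{*}\,B\,I^{M_1}_{m,\kappa}$ is a classical $\Psi$DO on $M_1^{\mathrm{int}}$ of order $(\operatorname{ord}B)-1$ for \emph{every} classical $\Psi$DO $B$ on $\Gamma_1^{\mathrm{int}}\cong(\p_-SM_1)^{\mathrm{int}}$; the case $B=\mathrm{Id}$ is the familiar statement that $N_{m,\kappa}=(I^{M_1}_{m,\kappa})^{*}I^{M_1}_{m,\kappa}$ is a $\Psi$DO of order $-1$, see \cite{SU-Duke,S-AIP,FSU,dairbekov2007boundary}, and the general $B$ follows from the clean--intersection composition calculus for Fourier integral operators applied to the Bolker condition. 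I would first record the routine reductions. Since we only consider $f$ (respectively $w$) supported in a fixed compact subset of $M_1^{\mathrm{int}}$ (respectively of $\Gamma_1^{\mathrm{int}}$), and since $\supp f\subseteq K$ forces $\supp I^{M_1}_{m,\kappa}f$ to lie in the set of geodesics meeting $K$, which is again compact in $\Gamma_1^{\mathrm{int}}$, one may work throughout with the usual interior Sobolev spaces, the partition--of--unity definition of $H^s_\Gamma$, and the identification of $H^s_\Gamma$ with the $\p_-SM_1$--parameterization space provided by Proposition~\ref{pr_eq}. The $L^2(M;S^m_M)\to L^2_\mu(\p_-SM)$ bound (Santal\'o's formula) serves as the base case.

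For part (a), fix $s\ge-1$ and a compact $K\Subset M_1^{\mathrm{int}}$ (say a neighbourhood of $M$), and choose a fixed positive elliptic classical $\Psi$DO $B$ of order $2s+1$ on a closed extension of $\Gamma_1^{\mathrm{int}}$. The point is that for $u$ supported in the fixed compact set $\Gamma\Subset\Gamma_1^{\mathrm{int}}$ the norm $\|u\|_{H^{s+1/2}_\Gamma}$ is equivalent to $\langle Bu,u\rangle_{L^2}^{1/2}$ --- this is exactly where the compact containment $\Gamma\Subset\Gamma_1^{\mathrm{int}}$ is used. Hence, with $u=I^{M_1}_{m,\kappa}f$,
\[
\|I^{M_1}_{m,\kappa}f\|_{H^{s+1/2}_\Gamma}^{2}\ \le\ C\,\big\langle B\,I^{M_1}_{m,\kappa}f,\ I^{M_1}_{m,\kappa}f\big\rangle_{L^2}
=C\,\big\langle (I^{M_1}_{m,\kappa})^{*}B\,I^{M_1}_{m,\kappa}f,\ f\big\rangle_{L^2}.
\]
Now $(I^{M_1}_{m,\kappa})^{*}B\,I^{M_1}_{m,\kappa}$ is a $\Psi$DO of order $2s$ on $M_1^{\mathrm{int}}$, hence bounded from $H^s$--sections supported in $K$ into $H^{-s}$ on a neighbourhood of $K$; pairing against $f$ bounds the right--hand side by $C\|f\|_{H^s_M}^{2}$, which is (a). The same argument applies with $M$ replaced by any compact $K'\Subset M_1^{\mathrm{int}}$, a fact used below.

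For part (b), I would argue by duality. Take $w\in C_0^\infty(\Gamma^{\mathrm{int}})$, dense in $H^{-s-1/2}_\Gamma$, and a test section $\phi$ supported in a fixed compact $K'\Subset M_1^{\mathrm{int}}$. Since $w$ is supported in $\Gamma$,
\[
\big\langle (I^{M_1}_{m,\kappa})^{*}w,\ \phi\big\rangle=\big\langle w,\ I^{M_1}_{m,\kappa}\phi\big\rangle=\big\langle w,\ (I^{M_1}_{m,\kappa}\phi)\big|_{\Gamma^{\mathrm{int}}}\big\rangle,
\]
and using $(H^{-s-1/2}_\Gamma)^{*}=H^{s+1/2}(\Gamma^{\mathrm{int}})$ together with part (a) applied on $K'$,
\[
\big|\langle (I^{M_1}_{m,\kappa})^{*}w,\phi\rangle\big|\le\|w\|_{H^{-s-1/2}_\Gamma}\,\|I^{M_1}_{m,\kappa}\phi\|_{H^{s+1/2}(\Gamma^{\mathrm{int}})}\le C_{K'}\,\|w\|_{H^{-s-1/2}_\Gamma}\,\|\phi\|_{H^s}.
\]
Thus $(I^{M_1}_{m,\kappa})^{*}w$, tested against $H^s$--sections supported in the arbitrary compact $K'$, is controlled by $\|w\|_{H^{-s-1/2}_\Gamma}$; since $K'$ is arbitrary this says precisely that $(I^{M_1}_{m,\kappa})^{*}w\in H^{-s}$ locally on $M_1^{\mathrm{int}}$ with the stated bound on compacts, and density extends this to all $w\in H^{-s-1/2}_\Gamma$. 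Nothing here distinguishes $m=0,1,2$ or uses $\kappa\equiv1$: the sole input is that $(I^{M_1}_{m,\kappa})^{*}B\,I^{M_1}_{m,\kappa}$ is a $\Psi$DO, valid for every $m\ge0$ and $\kappa\in C^\infty(SM_1)$.

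The main obstacle is not the interior microlocal analysis --- which is standard once the Bolker condition is invoked --- but the bookkeeping near $\p M$ and $\p\Gamma$: one must ensure that every $\Psi$DO manipulation takes place strictly inside $M_1$ and $\Gamma_1$ (hence the restriction to compactly supported sections and the identifications of the $H^s_\Gamma$ norms recorded above), and that the ``supported in $\Gamma$'' Sobolev norms on the geodesic side genuinely coincide, up to equivalence, with clean $\Psi$DO norms --- which again rests on $\Gamma\Subset\Gamma_1^{\mathrm{int}}$. The delicate boundary behaviour of the ray transform itself (the square--root singularity of $\tau$ at $S\p M$, the failure of the transmission condition for the normal operator) is deliberately sidestepped by never letting the analysis reach $\p M_1$ or $\p\Gamma_1$.
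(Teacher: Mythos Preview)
Your argument for (a) is correct and follows a genuinely different route from the paper. The paper simply cites \cite[Proposition~5.2]{SU-APDE} for the scalar case and observes that the tensor case is immediate. You instead give a self-contained microlocal proof: realize $\|I^{M_1}_{m,\kappa}f\|_{H^{s+1/2}_\Gamma}^2$ as $\langle B\,I^{M_1}_{m,\kappa}f, I^{M_1}_{m,\kappa}f\rangle$ for a positive elliptic $B$ of order $2s+1$, rewrite it as $\langle (I^{M_1}_{m,\kappa})^*B\,I^{M_1}_{m,\kappa}f,f\rangle$, and use that the Bolker condition on a simple manifold forces $(I^{M_1}_{m,\kappa})^*B\,I^{M_1}_{m,\kappa}$ to be a $\Psi$DO of order $2s$. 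This is a clean and instructive argument; it makes explicit that (a) is nothing more than the FIO order count together with the clean-intersection calculus, and it works uniformly in $m$ and $\kappa$.

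For (b), both you and the paper argue by duality, but your version falls just short of the stated conclusion. You test $(I^{M_1}_{m,\kappa})^*w$ against $\phi$ supported in an \emph{arbitrary compact} $K'\Subset M_1^{\mathrm{int}}$ and obtain a $K'$-dependent constant; this yields only $(I^{M_1}_{m,\kappa})^*w\in H^{-s}_{\mathrm{loc}}(M_1^{\mathrm{int}})$. The proposition, however, asserts boundedness into the Banach space $H^{-s}(M_1^{\mathrm{int}})$ in the sense of Section~\ref{sec_Hs} (the restriction/quotient space dual to $H^s_{M_1}$), and this is exactly what is used in the proof of Theorem~\ref{thm1}: one needs $\|N_mf\|_{H^1(M_1)}\le C\|I_m^{M_1}f\|_{H^{1/2}_\Gamma}$, a bound on all of $M_1$, not merely on interior compacta. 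Your constants $C_{K'}$ may blow up as $K'\uparrow M_1$, since geodesics through $K'$ then approach $\partial\Gamma_1$ where the interior FIO calculus you invoke is not uniform. The paper closes this gap by introducing a second extension $M_2\Supset M_1$, so that $M_1$ itself sits compactly inside $M_2^{\mathrm{int}}$ and $\Gamma_1\Subset\Gamma_2^{\mathrm{int}}$; one then applies (a) with $(M,M_1)$ replaced by $(M_1,M_2)$ to get \r{II}, and (b) follows by a single duality. Your argument is easily repaired in the same way---run your $\Psi$DO proof of (a) on $M_2$ with $K'=M_1$---but as written it does not deliver the uniform bound claimed.
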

\begin{proof}
Part (a)  is proved in  \cite[Proposition~5.2]{SU-APDE}  for $m=0$ and  $s \ge 0$ but the proof applies to $s\ge -1 $ as well (and it is actually simpler  when $s=-1$). Its tensor version $m\ge1$ is an immediate consequence. 

To prove (b), it is enough to prove that
\be{II}
I^{M_1}_{m,\kappa}  : H^{s}_{M_1}(M_1, S^m_{M_1})\to  H^{s+1/2}({\Gamma^\text{\rm int}}) \quad \text{is bounded},
\ee
then (b) would follow by duality. Here, $I^{M_1}_{m,\kappa} $ is considered as the operator acting on tensor fields supported in $M_1$ restricted to geodesics in $\Gamma^\text{int}$. We can think of $\Gamma^\textrm{int}$ as an open subset of the geodesics $\Gamma_2^\textrm{int}$, with $\Gamma_2$ defined as $\Gamma_1$ but related to an extension $M_2$ of $M_1$. Then by (a),  $I^{M_1}_{m,\kappa}: H^{s}_{M_1}(S^m_{M_1})\to  H^{s+1/2}_{\Gamma_2}$ is bounded, which also proves \r{II} and therefore, (b).

\end{proof}

\section{The spaces $\bar H^{1/2}_\Gamma$} \label{sec_HG}
Let $\gamma_0(t)$, $0\le t\le T$ is a fixed unit speed geodesic on a Riemannian manifold and let $\mathcal{S}$ be a hypersurface intersecting $\gamma_0$ transversely. We are interested in integrals of functions supported in a compact set separated from the endpoints of $\gamma_0$. We parameterize  geodesic (directed)  segments close to $\gamma_0$ (and that parameterization defines the topology) by initial points on $\mathcal{S}$ and initial unit directions. Assume that $\mathcal{S}$ is oriented; then we insist that $t$ increases on the positive side of $\mathcal{S}$. Then we can identify the unit directions with their projection on the unit ball bundle $B\mathcal{S}$. We will apply this construction when $\mathcal{S}$ is a piece of either $\bo$ or $\bo_1$. Note that we exclude geodesics tangent to them in those cases. 

Let $(y^1,\dots,y^{2n-2})$ be local coordinates near a fixed $(z_0,\omega_0)\in B \mathcal{S}$. 
Denote by $\gamma_y(t)$ the geodesics issued from $(z,\omega)$ parameterized by $y$. For some $k\le 2n-2$ fixed, denote $y=(y',y'')$ with $y'=(y^1,\dots,y^k)$, $y''=(y^{k+1},\dots,y^{2n-2})$ with $y'=y$ and $y''$ non-existent if $k=2n-2$. Then we define the $\bar H^{s}_\Gamma$ norm near $(z_0,\omega_0)$ by using $y'$-derivatives only; more precisely for $h$ supported near $(z_0,\omega_0)$, we set 
\be{norm}
\|h\|_{\bar H^{s}}^2 = \int \left( 1+|\xi'|^{2}  \right)^s \left|\mathcal{F}_{y'\to\xi'}h(\xi',y'')\right|^2 \,\d\xi'\,\d y''.
\ee
This is a special case of the spaces introduced in \cite[Definition~10.1.6]{Hormander2} and \cite[Definition~B.1.10]{Hormander3}. 

Given a compact subset $\Gamma\subset  B(\bo_1)$, and a finite  cover of coordinate charts of that kind, 
we use a partition of unity $\chi_j$ to complete that norm to a global one, which we call an $\bar H^{s}_\Gamma$ norm:
\[
\|h\|_{\bar H^{s}_\Gamma}^2 = \sum _j \|\chi_j h\|_{\bar H^s}^2 .
\]
This norm depends on the cover. We are going to require the following non-degeneracy condition in each chart: 
\be{ND}
\text{$\forall y''$, the map $(t,y')\to\gamma_{(y',y'')}(t)$ is a submersion}.
\ee
In other words, the differential of that map has full rank any time when the image is in $M$. Another way to interpret \r{ND} is to say that the Jacobi fields $\partial_{y^j} \gamma_{(y',y'')}(t)$, $j=1,\dots,k$, projected to $\dot \gamma^\perp _{(y',y'')}(t)$, span the latter at every point. 
Clearly,  condition \r{ND} requires $k\ge n-1$. When $k=2n-2$ (no $y''$ variables), \r{ND} hold trivially. 

\begin{example}\label{ex0}
If we use the $\p_-SM_1$ parameterization of $\Gamma$ on $\p M_1$ and take $k=2n-2$, the space $\bar H^{1/2}_\Gamma$ reduces to $H_\Gamma^{1/2}(\p_-SM_1)$. The latter is equivalent to $H^{1/2}_\Gamma$ defined through the foliation parameterization, see Proposition~\ref{pr_eq}.
\end{example}

\begin{example}\label{ex1}
The classical parameterizations of lines in the Euclidean case by $\Sigma$, see \r{Sigma}, is an example of such a coordinate system. In this case, $z$ belongs to the hyperplane $\theta^\perp$ depending on $\theta$ but near a fixed $\theta$, one can always construct a  local diffeomorphism  smoothly depending on $\theta$ allowing us to think of $z$ as a variable on a fixed hyperplane. If that diffeomorphism is a unitary map for each $\theta$ (which can be done), then this would not affect the definition of \r{norm}.  Then we set $z=y'\in \R^{n-1}$ and choose $y''\in\R^{n-1}$ to be a local parameterization of $\theta$. The map in \r{ND} is given by $(t,z)\mapsto z+t\theta$ which is a diffeomorphism. Then the resulting space $\bar H^{1/2}_\Gamma$ is the one appearing in \r{1.3}. Here, and in the examples below, $k=n-1$.
\end{example}

\begin{example}\label{ex2}
Near a point on $\p_-SM_1$ (or, equivalently, on $BM_1$), we choose coordinates $y''\in \R^{n-1}$ to parameterize points on $\bo_1$ and $y'\in \R^{n-1}$ to parameterize incident unit directions. Then the map \r{ND} is 
a submersion when its image is restricted to $M$ by the simplicity of $(M_1,g)$, which can be guaranteed if the extension is close enough to $(M,g)$. Note that we need the initial points of the geodesics to be outside $M$ since \r{ND} is the exponential map in polar coordinates, rather than in the usual ones, and it is not an submersion when $t=0$.
The resulting $\bar H^{1/2}_\Gamma$ space would involve derivatives w.r.t.\ the direction (but not w.r.t.\ the base point) only.  
While the specific definition of the norm depends on the coordinates used, a change would yield an equivalent norm. One can think of those coordinates as fan-beam ones on $\p M_1$ but we use the directions only to define $\bar H^{1/2}_\Gamma$. 
\end{example}

\begin{example}\label{ex3}
With $M_1$ as above, we swap $y'$ and $y''$. More precisely, near a point on $\partial_- SM_1$ (or, equivalently, on $BM_1$), we choose coordinates $y'\in \R^{n-1}$ to parameterize points on $\bo_1$ and $y''\in \R^{n-1}$ to parameterize incident unit directions. The corresponding Sobolev space $\bar H^{1/2}_\Gamma$ will include derivative w.r.t.\ initial points on $\p M_1$ only \textit{in the chosen coordinate system}. A change of variables would include directional derivatives as well. For rays close enough to ones tangential to $\bo$, \r{ND} will hold by a perturbation argument. Then we use a partition of unity do define $\bar H^{1/2}(\Gamma)$ near the boundary of $\Gamma$ (consisting of geodesics tangent to $\p M$ generating to points). In the interior of $\Gamma$, we can swap $y'$ and $y''$ (use derivatives w.r.t.\ the directions) or use all variables, as in   Example~\ref{ex1}. 
\end{example}

 \begin{example}\label{ex4}
One may wonder if one of the parameterizations on $\bo$ (rather than on $\bo_1$) would work as well. If we view the $\p_-SM$ parameterization as an $\p_-SM_1$ projected onto $\p_-SM$, then the natural measure would be $d\mu$, see section~ \ref{sec_Hs}. The derivatives w.r.t.\ initial points on $\bo$ (see also Figure~\ref{pic_par}, with  directions tangent to $\bo$ fixed in some coordinate system (or varying smoothly) would correspond to Jacobi fields which do not satisfy \r{ND} at $t=0$, as it is easy to see. The  directional derivatives at every fixed $x\in M$ generate Jacobi fields vanishing at $t=0$. Therefore, \r{ND} is not satisfied even if we take all possible derivatives. This shows that the space $H^{1/2}_{\mu,\overline{\p_-SM}}$ (here, $\mu$ stands for the measure) does not satisfy \r{ND}. Note that such a space can also be defined as an complex interpolation space between similar spaces with $s=0$ and $s=1$, see \cite[Theorem~B.9]{McLean-book}, where one can use classical definitions of norms through derivatives. The fact that \r{ND} fails in this case does not prove that we cannot use the $H^{1/2}_{\mu,\overline{\p_-SM}}$  norm in our main results yet however. 
\end{example}

\section{Proof of the main theorem}\label{sctn::new parameterization}
\begin{proof}
The starting point are the  stability estimates \r{1.3} for $m=0,1$ and  \r{1.5} for $m=2$, the latter due to the second author \cite[Theorem~1]{S-AIP}, valid for all symmetric 2-tensor field $f\in L^2(M; S^2_M)$. 
First we will estimate $\|N^{M_1} f\|_{H^1(M_1)}$ in the first inequality in \r{1.3}, respectively \eqref{1.5}, by $C\|I f\|_{ H^{1/2}_\Gamma}$, see \r{Hs}, with the corresponding ray transform $I$. We will take $m=2$ below and the proof is the same for $m=0,1$.

By Proposition~\ref{prop::boundedness of I*}(a), applied to the extension of $f\in L^2(M;S^2_M)$  by zero to $M_1\setminus M$, we have $I_2^{M_1}f\in H^{1/2}_\Gamma$ and that map is continuous.  This proves the second inequality in the theorem, part (c), because $I_2f=I_2f^s$. 
We also have  $\supp(I_2^{M_1}f)\subset \Gamma$. 
Applying  Proposition~\ref{prop::boundedness of I*}(b) with $s=-1$ to the middle term  of \eqref{1.5}, we obtain
\begin{equation}  \label{12}
\|f^s\|_{L^2(M;S^2_M)}\le C\|I_2^{M_1} f\|_{H^{1/2}_\Gamma},\quad f\in L^2(M;S^2_M).
\end{equation}
This completes the proof of the first inequality in the theorem with the $H^{1/2}_\Gamma$ norm, i.e., when $k=2n-2$ (in all charts). Then the norm $\|\cdot\|_{\bar H^s_\Gamma}$ is equivalent to  $\|\cdot\|_{H^{s}_\Gamma}$. 

In the remainder of the proof, we consider the more interesting case when $k<2n-2$ and \r{ND} holds.  Then the norm $\|\cdot\|_{\bar H^s_\Gamma}$ is not  equivalent to  $\|\cdot\|_{H^{s}_\Gamma}$ anymore. 
The main idea is that in that case, locally, while $(\partial_{y^1},\dots, \partial_{y^k})$ is not elliptic (in $\R^{2n-2}_{y',y''}$), it is elliptic on the Lagrangian of $I_{\kappa,n}$; more precisely on the image of $T^*M\setminus 0$ under the canonical relation $C$,  where $\WF(I_{\kappa,n}f)$ lies.

We can view $I_{m,\kappa}$ as a sum of several weighted geodesic X-ray transforms of the scalar components of the tensor $f$ (in a  coordinate system near a fixed geodesic). It was shown in \cite{MonardSU14} that each such transform, and therefore $I_{m,\kappa}$ itself, is an FIO with the following canonical relation $C$. Let $(\zeta, \omega)$ be the dual variables of $(z,w)$, and $\xi$ be the dual of $x$. Then $(z,w,\zeta,\omega; x,\xi)\in C$, if and only if there exists $t$ so that
\[
x=\gamma_{z,w}(t), \quad \xi_j\dot\gamma^j_{z,w}(t)=0, \quad \zeta_\alpha= \xi_j \frac{\partial \gamma^j_{z,w}(t)}{\partial z^\alpha}, \quad\omega_\alpha =  \xi_j \frac{\partial \gamma^j_{z,w}(t)}{\partial w^\alpha}. 
\]
In particular, this shows that the dual variables $(\zeta, \omega)$ along each geodesic are Jacobi fields projected to its conormal bundle. Passing to the $y$ variables, the last two equations become 
\[
\eta_\ell  =  \xi_j \frac{\partial \gamma^j_{y}(t)}{\partial y^\ell}, \quad \ell=1,\dots,2n-2,
\]
where $\eta$ is the dual to $y$. 
We can describe $C$ in the following way. For a fixed $(x,\xi)$, choose any $\gamma^j_{z,w}$ through $x$ normal to $\xi$ (that set is diffeomorphic to $S^{n-2}$); then $C(x,\xi)$ is the union of all  $(y,\eta)$  so that $y$ parameterizes some of those geodesics and $\eta_\ell$ is its Jacobi field corresponding to $\partial_{y^\ell}$ at $x$ projected to $\xi$, see \cite{MonardSU14}. By our assumption, for every such fixed geodesic, at least one of those projections corresponding to $\ell=1,\dots,k$,  would not vanish. This means that $\Delta_{y'}$ is elliptic on the image of $T^*M\setminus 0$ under $C$ (which is conically compact, and therefore,  $\Delta_{y'}$ it is elliptic  in a neighborhood of it), where $\WF(I_{m,\kappa}f)$ lies. Therefore, given $s$,  one can build a left parametrix $A$ of order zero  to get
\be{BA1}
A (1-\Delta_{y'})^s \chi_j I_{m,\kappa}f =(1-\Delta_y )^s \chi_j  I_{m,\kappa}f + Rf
\ee
with $R:C_0^\infty(M)\to C_0^\infty(M_1)$ smoothing. Here the fractional powers are defined through the Fourier transform and $A$ is properly supported in some neighborhood of $\supp\chi_j\times\supp\chi_j$. Summing up, we get the estimate
\be{ES1}
\|I_{m,\kappa}f \|_{H^s}\le C \|I_{m,\kappa}f \|_{\bar H_\Gamma^s}+ C_N\|f\|_{H^{-N}(M_1)}
\ee
with $N$ as large as we want. On the other hand, the estimate
\be{ES2}
\|I_{m,\kappa}f \|_{\bar H_\Gamma^s} \le C\|I_{m,\kappa}f \|_{H^s}
\ee
is immediate. 

Since we proved Theorem~\ref{thm1} with the $H^{1/2}_\Gamma$  norm (when $k=2n-2$), by \r{ES1}, \r{ES2}, we can replace that norm by any norm of the $\bar H^{1/2}_\Gamma$ ones at the expense of getting an error term; for (c) in Theorem~\ref{thm1}, for example, we get
\[
\|f^s\|_{L^2(M;S^2_M)}/C\le \|I_2f\|_{\bar H^{1/2}_\Gamma}\le C\|f^s\|_{L^2(M;S^2_M)}+ C_N\|f^s \|_{H^{-N}(M_1)}
\] 
$\forall N$, since $I_2f=I_2f^s$. Since $I_2: \mathcal{S}L^2(M;S_M^2)\mapsto \bar H^{1/2}_\Gamma$, where $\mathcal{S}$ is the projection onto the solenoidal tensors,  is injective, a standard functional analysis argument implies that the last term can be removed at the expense of increasing $C$. 
\end{proof}

The estimate which we prove and even the Euclidean estimate \r{1.1} may look unexpected. The transform $I_{\kappa,m}$ is overdetermined in the sense that it acts from an $n$ dimensional space to an $2n-2$ dimensional one. One could expect that $n$ derivatives in the definition of the Sobolev spaces of the image would be enough but it turns out that $n-1$ suffice, under condition \r{ND}. In the next example, we demonstrate, in a simple situation, that not only the dimension of the Lagrangian projected on the image matters; its structure is the one allowing us to get away with one less variable. 

\begin{example}\label{ex6}
We will demonstrate explicitly how this argument works for the Radon transform $R$ in $\R^2$ with the ``parallel geometry'' paremeterization $x\cdot\omega=p$, $|\omega|=1$, $p\in \R$. We parameterize $\omega$ by its polar angle $\varphi$ and denote by $(\hat p, \hat\varphi)$ the dual variables to $(p,\omega)$. It is well known, and also follows from the analysis above that $R$ is an FIO with a canonical relation which is a local diffeomorphism. A direct computation \cite{S-sampling} shows that under the a priori assumption $\supp f\in B(0,R)$, we have
\be{R10}
 \WF(Rf)\subset \left\{(\varphi,p,\hat\varphi,\hat p); \; |\hat\varphi|\le R|\hat p|\right\}.
\ee
The symbol $\hat p$ (corresponding to $-i \partial_p)$ is not elliptic because it vanishes on the line $\hat p=0$, $\hat\varphi\not=0$. On the other hand, that line is separated from the cone in the r.h.s.\ of \r{R10}. Using a partition of unity on the unit circle $|\hat\varphi|^2+|\hat p|^2=1$ one can always  modify $\hat p$ away from that cone to make it elliptic of order $1$ (for example, by adding a suitable elliptic pure imaginary symbol away from the cone to ensure ellipticity in the transition region as well  where the cutoff is neither $0$ nor $1$). This would result in a smoothing error applied to $f$; and will lead to an elliptic extension of $\hat p$. This shows that defining an $H^{1/2}$ Sobolev space for $Rf$ with the $p$ derivative only should work; and this is a partial case of \r{1.3} written in the $(p,\varphi)$ coordinates. 
\end{example}
 
We recall that the main argument in the proof of the main theorem was that $\Delta_{y'}$ was elliptic on the range of the canonical relation $C$ (away from the zero section). In Example~\ref{ex6}, $y'=p$ and clearly, the dual variable $\hat p$ does not vanish on \r{R10}. In dimensions $n=2$, that range has dimension $4$, which is also the dimension of $T^*M$ and also of $T^*\Gamma$, see also \cite{MonardSU14}. For general dimensions $n\ge2$, this microlocal range has dimension $3n-2$ as it follows from \cite{MonardSU14}; and when $n\ge3$, this is strictly smaller than the dimension $4n-4$ of the phase space $T^*\Gamma$ of all geodesics.


\begin{thebibliography}{10}

\bibitem{AnikonovR}
Y.~E. Anikonov and V.~G. Romanov.
\newblock On uniqueness of determination of a form of first degree by its
  integrals along geodesics.
\newblock {\em J. Inverse Ill-Posed Probl.}, 5(6):487--490 (1998), 1997.

\bibitem{BGerver}
I.~N. Bernstein and M.~L. Gerver.
\newblock A problem of integral geometry for a family of geodesics and an
  inverse kinematic seismics problem.
\newblock {\em Dokl. Akad. Nauk SSSR}, 243(2):302--305, 1978.

\bibitem{Boman93}
J.~Boman.
\newblock An example of nonuniqueness for a generalized {R}adon transform.
\newblock {\em J. Anal. Math.}, 61:395--401, 1993.

\bibitem{Boman11}
J.~Boman.
\newblock Local non-injectivity for weighted {R}adon transforms.
\newblock In {\em Tomography and inverse transport theory}, volume 559 of {\em
  Contemp. Math.}, pages 39--47. Amer. Math. Soc., Providence, RI, 2011.

\bibitem{Boman-S}
J.~Boman and V.~Sharafutdinov.
\newblock Stability estimates in tensor tomography.
\newblock {\em preprint}, 2017.

\bibitem{dairbekov2007boundary}
N.~S. Dairbekov, G.~P. Paternain, P.~Stefanov, and G.~Uhlmann.
\newblock The boundary rigidity problem in the presence of a magnetic field.
\newblock {\em Advances in Mathematics}, {\bf 216}(2):535--609, 2007.

\bibitem{FSU}
B.~Frigyik, P.~Stefanov, and G.~Uhlmann.
\newblock The {X}-ray transform for a generic family of curves and weights.
\newblock {\em J. Geom. Anal.}, 18(1):89--108, 2008.

\bibitem{S-Sean-Doppler}
S.~Holman and P.~Stefanov.
\newblock The weighted {D}oppler transform.
\newblock {\em Inverse Probl. Imaging}, 4(1):111--130, 2010.

\bibitem{Sean-X}
S.~Holman and G.~Uhlmann.
\newblock On the microlocal analysis of the geodesic {X}-ray transform with
  conjugate points.
\newblock {\em J. Differential Geom.}, 108(3):459--494, 2018.

\bibitem{Hormander2}
L.~H{\"o}rmander.
\newblock {\em The analysis of linear partial differential operators. {II}},
  volume 257.
\newblock Springer-Verlag, Berlin, 1983.
\newblock Differential operators with constant coefficients.

\bibitem{Hormander3}
L.~H{\"o}rmander.
\newblock {\em The analysis of linear partial differential operators. {III}},
  volume 274.
\newblock Springer-Verlag, Berlin, 1985.
\newblock Pseudodifferential operators.

\bibitem{McLean-book}
W.~McLean.
\newblock {\em Strongly elliptic systems and boundary integral equations}.
\newblock Cambridge University Press, Cambridge, 2000.

\bibitem{Monard14}
F.~Monard.
\newblock Numerical implementation of geodesic {X}-ray transforms and their
  inversion.
\newblock {\em SIAM J. Imaging Sci.}, 7(2):1335--1357, 2014.

\bibitem{MonardNP-2018}
F.~Monard, R.~Nickl, and G.~P. Paternain.
\newblock Efficient nonparametric {B}ayesian inference for $x$-ray transforms.
\newblock {\em Ann. Statist.}, 47(2):1113--1147, 04 2019.

\bibitem{MonardSU14}
F.~Monard, P.~Stefanov, and G.~Uhlmann.
\newblock The geodesic ray transform on {R}iemannian surfaces with conjugate
  points.
\newblock {\em Comm. Math. Phys.}, 337(3):1491--1513, 2015.

\bibitem{Muhometov_77}
R.~G. Muhometov.
\newblock The reconstruction problem of a two-dimensional {R}iemannian metric,
  and integral geometry.
\newblock {\em Dokl. Akad. Nauk SSSR}, 232(1):32--35, 1977.

\bibitem{Natterer-book}
F.~Natterer.
\newblock {\em The mathematics of computerized tomography}.
\newblock B. G. Teubner, Stuttgart, 1986.

\bibitem{paternain2013tensor}
G.~P. Paternain, M.~Salo, and G.~Uhlmann.
\newblock Tensor tomography on surfaces.
\newblock {\em Inventiones Mathematicae}, {\bf 193}(1):229--247, 2013.

\bibitem{PestovU04}
L.~Pestov and G.~Uhlmann.
\newblock On characterization of the range and inversion formulas for the
  geodesic {X}-ray transform.
\newblock {\em Int. Math. Res. Not.}, (80):4331--4347, 2004.

\bibitem{SaloU11}
M.~Salo and G.~Uhlmann.
\newblock The attenuated ray transform on simple surfaces.
\newblock {\em J. Differential Geom.}, 88(1):161--187, 2011.

\bibitem{Sh-book}
V.~A. Sharafutdinov.
\newblock {\em Integral geometry of tensor fields}.
\newblock Inverse and Ill-posed Problems Series. VSP, Utrecht, 1994.

\bibitem{Sh-finite}
V.~A. Sharafutdinov.
\newblock A finiteness theorem for the ray transform on a {R}iemannian
  manifold.
\newblock {\em Dokl. Akad. Nauk}, 355(2):167--169, 1997.

\bibitem{Sh-UW}
V.~A. Sharafutdinov.
\newblock Ray transform on {R}iemannian manifolds, lecture notes,
  {U}{W}--{S}eattle.
\newblock available at:
  http://www.ima.umn.edu/talks/workshops/7-16-27.2001/sharafutdinov/, 1999.

\bibitem{S-AIP}
P.~Stefanov.
\newblock A sharp stability estimate in tensor tomography.
\newblock {\em Journal of Physics: Conference Series}, 124(1):012007, 2008.

\bibitem{S-sampling}
P.~Stefanov.
\newblock Semiclassical sampling and discretization of certain linear inverse
  problems.
\newblock {\em arXiv:1811.01240}, 2019.

\bibitem{SU-Duke}
P.~Stefanov and G.~Uhlmann.
\newblock Stability estimates for the {X}-ray transform of tensor fields and
  boundary rigidity.
\newblock {\em Duke Math. J.}, 123(3):445--467, 2004.

\bibitem{SU-JAMS}
P.~Stefanov and G.~Uhlmann.
\newblock Boundary rigidity and stability for generic simple metrics.
\newblock {\em J. Amer. Math. Soc.}, 18(4):975--1003, 2005.

\bibitem{SU-AJM}
P.~Stefanov and G.~Uhlmann.
\newblock Integral geometry on tensor fields on a class of non-simple
  {R}iemannian manifolds.
\newblock {\em Amer. J. Math.}, 130(1):239--268, 2008.

\bibitem{SU-APDE}
P.~Stefanov and G.~Uhlmann.
\newblock An inverse source problem in optical molecular imaging.
\newblock {\em Anal. PDE}, 1(1):115--126, 2008.

\bibitem{SU-lens}
P.~Stefanov and G.~Uhlmann.
\newblock Local lens rigidity with incomplete data for a class of non-simple
  {R}iemannian manifolds.
\newblock {\em J. Differential Geom.}, 82(2):383--409, 2009.

\bibitem{SU-caustics}
P.~Stefanov and G.~Uhlmann.
\newblock The geodesic {X}-ray transform with fold caustics.
\newblock {\em Anal. PDE}, 5(2):219--260, 2012.

\bibitem{SUV_anisotropic}
P.~Stefanov, G.~Uhlmann, and A.~Vasy.
\newblock Local and global boundary rigidity and the geodesic {X}-ray transform
  in the normal gauge.
\newblock {\em arXiv:1702.03638}, 2017.

\bibitem{SUV-tensors}
P.~Stefanov, G.~Uhlmann, and A.~Vasy.
\newblock Inverting the local geodesic {X}-ray transform on tensors.
\newblock {\em Journal d'Analyse Math\'ematique}, 136(1):151--208, 2018.

\bibitem{UV:local}
G.~Uhlmann and A.~Vasy.
\newblock The inverse problem for the local geodesic ray transform.
\newblock {\em Invent. Math.}, 205(1):83--120, 2016.

\end{thebibliography}

\end{document}